\newcommand{\bbF}{\mathbb{F}}
\newcommand{\bbP}{\mathbb{P}}
\newcommand{\bbS}{\mathbb{S}}
\newcommand{\bbZ}{\mathbb{Z}}
\newcommand{\calC}{\mathcal{C}}
\newcommand{\calE}{\mathcal{E}}
\newcommand{\calS}{\mathcal{S}}
\newcommand{\rmB}{\mathrm{B}}
\newcommand{\rmD}{\mathrm{D}}
\newcommand{\rmE}{\mathrm{E}}
\newcommand{\rmG}{\mathrm{G}}
\newcommand{\rmH}{\mathrm{H}}
\newcommand{\rmK}{\mathrm{K}}
\newcommand{\rmL}{\mathrm{L}}
\newcommand{\rmS}{\mathrm{S}}
\newcommand{\rmW}{\mathrm{W}}
\newcommand{\h}{\mathrm{h}}
\newcommand{\hatS}{\widehat{\bbS}}
\newcommand{\SSp}{\bbS/\!\!/p}
\newcommand{\SSt}{\bbS/\!\!/2}
\newcommand{\SSzeta}{\hatS/\!\!/\zeta}
\newcommand{\id}{\mathrm{id}}
\newcommand{\ch}{\mathrm{char}}
\newcommand{\ev}{\operatorname{ev}}
\newcommand{\BP}{\operatorname{BP}}
\newcommand{\BoP}{\operatorname{BoP}}
\newcommand{\BO}{\operatorname{BO}}
\newcommand{\EO}{\operatorname{EO}}
\newcommand{\MO}{\operatorname{MO}}
\newcommand{\MSO}{\operatorname{MSO}}
\newcommand{\MSpin}{\operatorname{MSpin}}
\newcommand{\MSU}{\operatorname{MSU}}
\newcommand{\SU}{\operatorname{SU}}
\newcommand{\MString}{\operatorname{MString}}
\newcommand{\MUsix}{\operatorname{MU}\langle6\rangle}
\newcommand{\tmf}{\operatorname{tmf}}
\newcommand{\ku}{\operatorname{ku}}
\newcommand{\ko}{\operatorname{ko}}
\newcommand{\KU}{\operatorname{KU}}
\newcommand{\KO}{\operatorname{KO}}
\newtheorem{proposition}{Proposition}[section]
\theoremstyle{definition}
\newtheorem{remark}[proposition]{Remark}
\newtheorem{definition}[proposition]{Definition}
\newtheorem{example}[proposition]{Example}
\numberwithin{equation}{section}
\begin{document}

\title{String bordism and chromatic characteristics}

\author{Markus Szymik}

\date{\today}

\maketitle

\renewcommand{\abstractname}{}

\begin{abstract}

\noindent 
We introduce characteristics into chromatic homotopy theory. This parallels the prime characteristics in number theory as well as in our earlier work on structured ring spectra and unoriented bordism theory. Here, the~$\rmK(n)$-local Hopkins-Miller classes~$\zeta_n$ take the places of the prime numbers, and this allows us to discuss higher bordism theories. We prove that the~$\rmK(2)$-localizations of the spectrum of topological modular forms as well as the string bordism spectrum have characteristic~$\zeta_2$.

\vspace{\baselineskip}

\noindent 2010 MSC:
primary 
55N22,  
55P43;  
secondary
19L41,  
57R90,  
58J26.  
\end{abstract}


\section{Introduction}

The classification of manifolds is intimately tied to the homotopy theory of Thom spaces and spectra. If~$\MO$ denotes the Thom spectrum for the family of orthogonal groups, then its homotopy groups~$\pi_d\MO$ are given by the groups of bordism classes of~$d$-dimensional closed manifolds. Variants of this correspondence apply to manifolds with extra structure, such as orientations and Spin structures, for example. Arguably the most relevant of these variants for geometry are ordered into a hierarchy given by the higher connective covers~\hbox{$\BO\langle k\rangle\to\BO$} of~$\BO$, and their Thom spectra~$\MO\langle k\rangle$. For small values of~$k$, these describe the unoriented ($\MO\langle 1\rangle=\MO$), oriented ($\MO\langle 2\rangle=\MSO$), Spin~($\MO\langle 4\rangle=\MSpin$), and String bordism groups of manifolds ($\MO\langle 8\rangle=\MString$). The name `string' in this context appears to be due to Miller, see~\cite{Laures:q}. The spectra~$\MO\langle k\rangle$ are also interesting as approximations to the sphere spectrum~$\bbS$ itself, in a sense that can be made precise~\cite[Proposition~2.1.1]{Hovey:Duke}:~$\bbS\simeq\lim_k\MO\langle k\rangle$. The geometric relevance of the sphere spectrum stems, of course, from the fact that it is the Thom spectrum for stably framed manifolds.


All the bordism spectra that were just mentioned are canonically commutative ring spectra in the best sense of the word, namely~$\rmE_\infty$ ring spectra \cite{Eoo}. In fact, this concept was more or less invented in order to deal with the very examples of Thom spectra \cite{LMS}. The multiplicative structure allows us to study them through their {\it genera}: maps {\em out of} commutative bordism ring spectra into spectra which are easier to understand. This has been rather successful for small values of~$k$, and the following diagram summarizes the situation.
\begin{center}
  \mbox{ 
    \xymatrix@R=20pt{
    \vdots\ar[d] &\\
    \MString\ar[d]\ar[r] & \tmf\\
    \MSpin\ar[d]\ar[r] & \ko\\
    \MSO\ar[d]\ar[r] & \rmH\bbZ\\
    \MO\ar[r] & \rmH\bbF_2
}
}
\end{center}
Here, the spectra~$\rmH\bbF_2$ and~$\rmH\bbZ$ are the Eilenberg-Mac Lane spectra of the indicated rings, and the genera count the number of points mod~$2$ and with signs, respectively. In the next line, the spectrum~$\ko$ is the connective real K-theory spectrum that receives the topological~$\widehat{A}$-genus (or Atiyah-Bott-Shapiro orientation), compare~\cite{Joachim:1} and \cite{Joachim:2}. Finally, the spectrum~$\tmf$ is the spectrum of topological modular forms that was constructed in order to refine the Witten genus (or~$\sigma$-orientation), see \cite{Hopkins:ICM1}, \cite{Hopkins:ICM2}, \cite{Ando+Hopkins+Strickland:1}, \cite{Ando+Hopkins+Strickland:2}, and~\cite{Ando+Hopkins+Rezk}.


{\it Characteristics} in the sense of the title appear in the approach that is dual to the idea underlying genera. Namely, there are interesting ring spectra that come with maps {\em into} these bordism spectra. For example, since the unoriented bordism ring~$\pi_*\MO$ has characteristic~$2$, there is a unique (up to homotopy) map~$\SSt\to\MO$ of~$\rmE_\infty$ ring spectra from the versal~$\rmE_\infty$ ring spectrum~$\SSt$ of characteristic~$2$, see~\cite{Szymik}, where~$\rmE_\infty$ ring spectra of prime characteristics, and their versal examples~$\SSp$, have been studied from this point of view. However, the fact that~$\pi_0\MO\langle k\rangle=\bbZ$ as soon as~$k\geqslant 2$ makes it evident that ordinary prime characteristics have nothing to say about higher bordism theories. 


In order to gain a better understanding of higher bordism theories, we propose in this paper to replace the ordinary primes~$p\in\bbZ=\pi_0\bbS$ by something more elaborate, some classes that only appear after passing to the (Bousfield~\cite{Bousfield}) localization~$\hatS$ of the sphere spectrum~$\bbS$ with respect to any given Morava K-theory~$\rmK(n)$: the classes~$\zeta_n$ in~$\pi_{-1}\hatS$ which were first defined by Hopkins and Miller. See~\cite{Hovey:conjecture},~\cite{Devinatz+Hopkins}, and our exposition in Section~\ref{sec:zeta}. Just as~$\SSt$ has been used in~\cite{Szymik} to study the unoriented bordism spectrum~$\MO$, the aim of the present writing is to show that it is the spectra~$\SSzeta_n$ which are likewise relevant to the study of the chromatic localizations of higher bordism spectra. Incidentally, they also play an important role for chromatic homotopy theory in itself, but an exposition of those ideas will have to await another occasion.


Whenever~$A$ is any~$\rmK(n)$-local~$\rmE_\infty$ ring spectrum with unit~$u_A\colon\hatS\to A$, there is a naturally associated class
\[
\zeta_n(A)\colon \rmS^{-1}\overset{\zeta_n}{\longrightarrow}\rmS^0\overset{u_A}{\longrightarrow}A
\]
in~$\pi_{-1}A$. Continuing to use the terminology as in~\cite{Szymik}, we will say that a~$\rmK(n)$-local~$\rmE_\infty$ ring spectrum~$A$ has {\it (chromatic) characteristic~$\zeta_n$} if there exists a homotopy~\hbox{$\zeta_n(A)\simeq0$}, compare Definition~\ref{def:char} below. We note that this just defines a property of such ring spectra, and that for structural purposes one will want to work with actual choices of homotopies, i.e.~with commutative~$\SSzeta_n$-algebras. See Section~\ref{subsec:versal}, and~\cite{Szymik} again.

The value of these concepts can be measured by the wealth of relevant examples: The~$\rmK(1)$-localizations of~$\ko$,~$\ku$,~$\MSpin$, and~$\MSU$ all have characteristic~$\zeta_1$, see Propositions~\ref{prop:ko} and~\ref{prop:MSpin}. Some of this rephrases parts of Hopkins' unpublished preprint~\cite{Hopkins:Preprint}, and also the first versal example~$\SSzeta_1$ already appears in there, albeit in a different guise. Additional arguments, and the extensions to the named bordism spectra have been published by Laures, see~\cite{Laures:Spin} and~\cite{Laures:TMF}. 

In the genuinely new examples dealt with here, we take the natural next step: The~$\rmK(2)$-localizations of the topological modular forms spectrum~$\tmf$, the String bordism spectrum~$\MString$, and~$\MUsix$ all have characteristic~$\zeta_2$, see Propositions~\ref{prop:tmf} and~\ref{prop:Melliptic}.

There are other families of examples of characteristic~$\zeta_n$ spectra for arbitrary~$n$: the Lubin-Tate spectra~$\rmE_n$ (Example~\ref{ex:E}), the Iwasawa extensions~$\rmB_n$ of the~$\rmK(n)$-local sphere (Example~\ref{ex:B}), and the versal examples~$\SSzeta_n$ that map to all of these, see Proposition~\ref{prop:versal}.

We will use the following conventions: All spectra are implicitly~$\rmK(n)$-localized. In particular, the notation~$X\wedge Y$ will refer to the~$\rmK(n)$-localization of the usual smash product, and the homology~$X_0Y$ is defined as~$\pi_0$ of that. As an exemption to these rules, we will write~$\widehat\bbS$ for the~$\rmK(n)$-local sphere to emphasize the idea that it is a completed form of the sphere spectrum~$\bbS$, and~$\rmS^n=\Sigma^n\hatS$ for its (de)suspensions. 


\section{Characteristics in chromatic homotopy theory}\label{sec:zeta}

In this section we will review some chromatic homotopy theory as far as it is needed for our purposes, and introduce the basic concept of chromatic characteristics, see Section~\ref{subsec:char}. The case~$n=1$ will be mentioned as an accompanying example throughout, but we emphasize that this case is always somewhat atypical, and the general case is the one we are interested in. Also, in the spirit of \cite{Hovey+Strickland}, we have chosen notation that avoids having to say anything special when~$p=2$. Nevertheless, we do so, if it seems appropriate for the examples at hand, in particular in Section~\ref{sec:bordism} when it comes to bordism theories.

\subsection{The Lubin-Tate spectra}

Let~$p$ be a prime number, and~$n$ a positive integer. We will denote by~$\rmE_n$ the corresponding Lubin-Tate spectrum. The coefficient ring is isomorphic to
\[
\pi_*\rmE_n\cong \rmW(\bbF_{p^n})[[u_1,\dots,u_{n-1}]][u^{\pm1}],
\]
where~$\rmW$ is the Witt vector functor from commutative rings to commutative rings. This coefficient ring (or rather its formal spectrum) is a base for the universal formal deformation of the Honda formal group of height~$n$. 

\begin{example}
If~$n=1$, then the Lubin-Tate spectrum~$\rmE_1$ is the~$p$-adic completion~$\KU_p$ of the complex topological~$\rmK$-theory spectrum~$\KU$.
\end{example}

\subsection{The Morava groups}

The~$n$-th Morava stabilizer group~$\rmS_n$ and the Galois group of~$\bbF_{p^n}$ over~$\bbF_p$ both act on~$\rmE_n$ such that their semi-direct product~$\rmG_n$, the {\it extended} Morava stabilizer group, also acts on~$\rmE_n$.

\begin{example}
If~$n=1$, then the Morava stabilizer group~$\rmG_1=\bbS_1$ is the group~$\bbZ_p^\times$ of~$p$-adic units which acts on~$\rmE_1=\KU_p$ via Adams operations. 
\end{example}


\subsection{Devinatz-Hopkins fixed point spectra}

If~$K\leqslant\rmG_n$ is a closed subgroup of the extended Morava stabilizer group~$\rmG_n$, then~$\rmE_n^{\h K}$ will denote the corresponding Devinatz-Hopkins fixed point spectrum~\cite{Devinatz+Hopkins}. For example, in the maximal case~$K=\rmG_n$, Morava's change-of-rings theorem gives~$\rmE_n^{\h G}=\widehat\bbS$.

The Devinatz-Hopkins fixed point spectra are well under control in the optic of their Morava modules: There are isomorphisms
\[
(\rmE_n)_*(\rmE_n^{\h K})=\pi_*(\rmE_n\wedge \rmE_n^{\h K})\cong\calC(\rmG/K,\pi_*\rmE_n).
\]
For the trivial group~$K=e$ this has been known to Morava (and certainly others) for a long time. See~\cite{Hovey:Operations} for the history and a careful exposition.


\subsection{Some subgroups of the Morava stabilizer group}

The Morava stabilizer group acts on the Dieudonn\'e module of the Honda formal group of height~$n$, which is free of rank~$n$ over~$\rmW(\bbF_{p^n})$. The determinant gives a homomorphism~$\rmS_n\rightarrow\rmW(\bbF_{p^n})^\times$. This extends over~$\rmG_n$ and factors through~$\bbZ_p^\times$. The subgroup~$\rmS\rmG_n$ is defined as the kernel of the (surjective) determinant, so that we have an extension
\[
1\longrightarrow\rmS\rmG_n\longrightarrow\rmG_n\longrightarrow\bbZ_p^\times\longrightarrow1
\]
of groups. Let~$\Delta\leqslant\bbZ_p^\times$ denote the torsion subgroup. If~$p=2$, then this subgroup is cyclic of order~$2$, and if~$p\not=2$, then it is cyclic of order~$p-1$. The pre-image of~$\Delta$ under the determinant is customarily denoted by~$\rmG_n^1$. In other words, there is an extension
\begin{equation}\label{eq:residual}
1\longrightarrow\rmG_n^1\longrightarrow\rmG_n\longrightarrow\bbZ_p^\times/\Delta\longrightarrow1,
\end{equation}
and the groups~$\rmS\rmG_n$ and~$\rmG_n^1$ are then also related by a short exact sequence
\[
1\longrightarrow\rmS\rmG_n\longrightarrow\rmG_n^1\longrightarrow\Delta\longrightarrow1.
\]
We remark that there are (abstract) isomorphisms~$\bbZ_p^\times/\Delta\cong\bbZ_p$ of groups, but no canonical choice seems to be available.


\subsection{The Iwasawa extensions of the local spheres}\label{sec:B}

An Iwasawa extension is a (pro-)Galois extension (for example of a number field) with Galois group isomorphic to the additive group~$\bbZ_p$ of~$p$-adic integers for some prime number~$p$. The canonical Iwasawa extensions of the~$\rmK(n)$-local sphere is the Devinatz-Hopkins fixed point spectrum
\[
\rmB_n=\rmE_n^{\h\rmG^1_n}
\]
with respect to the closed subgroup~$\rmG^1_n$. This is sometimes referred to as {\it Mahowald's half-sphere}, in particular in the case~$n=2$. 

\begin{example}
If~$n=1$, then the spectrum~$\rmB_1$ is either the~$2$-completion~$\KO_2$ of the real topological~$\rmK$-theory spectrum~$\KO$~(when~\hbox{$p=2$}) or the Adams summand~$\rmL_p$ of the~$p$-completion of the complex topological~$\rmK$-theory spectrum~$\KU$~(when~\hbox{$p\not=2$}).
\end{example}

The spectra~$\rmB_n$ are well under control in the optic of their Morava modules: There are isomorphisms
\[
(\rmE_n)_*(\rmB_n)=\pi_*(\rmE_n\wedge\rmB_n)\cong\calC(\bbZ_p^\times/\Delta,\pi_*\rmE_n),
\]
and the right hand side can be identified (non-canonically) with the ring of continuous functions on the~$p$-adic affine line~$\bbZ_p$.

From~\eqref{eq:residual} we infer that the spectrum~$\rmB_n$ carries a residual action of~$\bbZ_p^\times/\Delta\cong\bbZ_p$, and this makes~$\hatS\to\rmB_n$ into an Iwasawa extension of the~$\rmK(n)$-local sphere. Whenever we chose a topological generator of this group, this yields 
an automorphism~$g\colon\rmB_n\rightarrow\rmB_n$.

\begin{proposition}{\bf(\cite[Proposition 8.1]{Devinatz+Hopkins})}
There is a homotopy fibration sequence
\begin{equation}\label{eq:fibration}
\rmS^0\overset{u_B}{\longrightarrow}
\rmB_n\overset{g-\id}{\longrightarrow}
\rmB_n\overset{\partial}{\longrightarrow}
\rmS^1
\end{equation}
of~$\rmK(n)$-local spectra.
\end{proposition}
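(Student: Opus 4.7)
The plan is to recognize~\eqref{eq:fibration} as encoding the continuous~$\bbZ_p$-homotopy fixed points of~$\rmB_n$ under the residual action of~$\bbZ_p^\times/\Delta\cong\bbZ_p$ coming from~\eqref{eq:residual}. Two inputs drive the argument: first, a transitivity formula for Devinatz-Hopkins fixed points applied to the closed normal subgroup~$\rmG_n^1\trianglelefteq\rmG_n$, and second, the fact that~$\bbZ_p$ has cohomological dimension one as a profinite group.

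The first step is to compute these fixed points. Iterating the Devinatz-Hopkins construction along the exact sequence~\eqref{eq:residual} should give
\[
\rmB_n^{\h\bbZ_p}=(\rmE_n^{\h\rmG_n^1})^{\h\bbZ_p}\simeq\rmE_n^{\h\rmG_n}\simeq\hatS,
\]
where the last equivalence is Morava's change-of-rings theorem recalled earlier. Under this identification, the unit map~$u_B\colon\hatS\to\rmB_n$ becomes the canonical inclusion of homotopy fixed points into the ambient spectrum.

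The second step is a general fact about continuous~$\bbZ_p$-actions. Since~$\bbZ_p$ is topologically free of rank one on a generator~$g$ and has cohomological dimension one, its continuous cohomology with coefficients in any~$\bbZ_p$-module~$M$ is computed by the two-term complex~$M\xrightarrow{g-\id}M$; hence for any $\rmK(n)$-local spectrum~$X$ carrying a continuous~$\bbZ_p$-action, the homotopy fixed-point spectral sequence collapses, and~$X^{\h\bbZ_p}$ may be modelled as the homotopy equalizer of~$g$ and~$\id$. Equivalently, there is a cofibration sequence
\[
X^{\h\bbZ_p}\longrightarrow X\xrightarrow{g-\id}X,
\]
and rotating one position to the right produces a boundary map~$\partial\colon X\to\Sigma X^{\h\bbZ_p}$. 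Substituting~$X=\rmB_n$ and~$X^{\h\bbZ_p}\simeq\hatS$ then yields precisely~\eqref{eq:fibration}.

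The main obstacle is the transitivity invoked in the first step: justifying an iterated fixed-point formula for the Devinatz-Hopkins construction applied to closed normal subgroups of~$\rmG_n$, and ensuring that the residual~$\bbZ_p$-action on~$\rmB_n$ is realized by the continuous action of a chosen topological generator in a way compatible with the model used to compute~$\rmB_n^{\h\bbZ_p}$. This is exactly the machinery of pro-spectra and continuous~$\rmG_n$-actions developed in~\cite{Devinatz+Hopkins}; once it is in place, both steps above become essentially formal.
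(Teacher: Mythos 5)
The paper itself offers no proof: this proposition is cited directly from Devinatz and Hopkins, Proposition 8.1, and the text simply fixes one such fibration sequence once and for all. Your reconstruction is faithful to the argument in the cited source. The two ingredients you identify — the iterated Devinatz--Hopkins fixed-point theorem applied along the closed normal subgroup~$\rmG_n^1\trianglelefteq\rmG_n$, giving~$(\rmE_n^{\h\rmG_n^1})^{\h\bbZ_p}\simeq\rmE_n^{\h\rmG_n}\simeq\hatS$, together with the collapse of the continuous~$\bbZ_p$-homotopy fixed points to the two-term equalizer of~$g$ and~$\id$ because~$\bbZ_p$ is a free profinite group of rank one — are exactly what drive the Devinatz--Hopkins proof. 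You are also right to flag the delicate point: the Devinatz--Hopkins fixed-point spectra are not naive homotopy fixed points, so both transitivity and the equalizer description must be established in their framework; in particular, since~$\bbZ_p$ is infinite, the~$\bbZ_p$-fixed points are assembled from the finite quotients~$\bbZ/p^k$ via the open subgroups interpolating between~$\rmG_n^1$ and~$\rmG_n$, and the fibration sequence~\eqref{eq:fibration} is obtained by passing to the limit of the corresponding finite-level sequences. With that proviso noted — and you did note it — the argument is correct and is essentially the one in the source.
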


For each~$p$ and~$n$, we fix one such fibration sequence once and for all.

\begin{example}
If~$n=1$, then the fibration sequence
\begin{equation}\label{eq:K(1)}
  \rmS^0\longrightarrow\rmB_1\overset{g-\id}{\longrightarrow}\rmB_1
\end{equation}
has been known for a long time. It can be extracted from~\cite{Bousfield}, which in turn relies on work of Mahowald ($p=2$) and Miller ($p\not=2$).
\end{example}


\subsection{The Hopkins-Miller classes}

We are now ready to introduce the Hopkins-Miller classes~$\zeta_n$ that are to play the role of the integral primes~$p$ in the chromatic context.

\begin{definition}
The homotopy class~$\zeta_n\in\pi_{-1}\widehat\bbS$ is defined as the (de-suspension of) the composition
\[
\rmS^0\overset{u_\rmB}{\longrightarrow}
\rmB_n\overset{\partial}{\longrightarrow}
\rmS^1
\]
of the outer maps in the homotopy fibration sequence~\eqref{eq:fibration}.
\end{definition}

If we map~$\rmS^0$ into the fibration sequence~\eqref{eq:fibration}, then we obtain a long exact sequence
\begin{equation}\label{eq:les}
[\rmS^0,\rmB_n]\overset{g_*-\id}{\longrightarrow}
[\rmS^0,\rmB_n]\overset{\partial_*}{\longrightarrow}
[\rmS^0,\rmS^1]\longrightarrow[\rmS^{-1},\rmB_n],
\end{equation}
and~$\zeta_n$ is, by definition, the image of the unit~$u_\rmB$ under the map~$\partial_*$. 

Using the defining homomorphism~\eqref{eq:residual} of~$\rmG_n^1$, we obtain a homomorphism
\[
c_n\colon\rmG_n\longrightarrow\bbZ_p^\times/\Delta\cong\bbZ_p\subseteq\pi_0\rmE_n
\]
that we can think of as a twisted homomorphism, or 1-cocyle, and as such it defines a class in the first continuous cohomology~$\rmH^1(\rmG_n;\pi_0\rmE_n)$.

\begin{proposition}\label{prop:zetanot0}{\bf(\cite[Proposition 8.2]{Devinatz+Hopkins})}
The Hopkins-Miller class~$\zeta_n$ is detected by~$\pm c_n$ in the~$\rmK(n)$-local~$\rmE_n$-based Adams-Novikov spectral sequence.
\end{proposition}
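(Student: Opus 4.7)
The plan is to realize the fibration sequence~\eqref{eq:fibration} as the Iwasawa descent step that presents $\hatS = \rmB_n^{\h\bbZ_p}$ along the quotient $\rmG_n \twoheadrightarrow \rmG_n/\rmG_n^1 \cong \bbZ_p^\times/\Delta \cong \bbZ_p$, and then to compare the resulting two-line descent spectral sequence with the full $\rmE_n$-based Adams-Novikov spectral sequence of~$\hatS$ via the Lyndon-Hochschild-Serre spectral sequence for the extension~\eqref{eq:residual}.

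First I would apply $\pi_*$ to~\eqref{eq:fibration}. Since $\bbZ_p$ is pro-cyclic, its continuous cohomology of any continuous module~$M$ is concentrated in degrees~$0$ and~$1$, computed by $\ker(g_*-\id)$ and $\operatorname{coker}(g_*-\id)$ respectively. The long exact sequence of~\eqref{eq:fibration} is therefore the unrolled form of a two-line descent spectral sequence
\[
E_2^{s,t}=\rmH^s(\bbZ_p;\pi_t\rmB_n)\Longrightarrow\pi_{t-s}\hatS.
\]
The defining relation $\zeta_n=\partial_*(u_\rmB)$ then forces $\zeta_n$ to have descent filtration at least~$1$, represented by the class of the unit $u_\rmB$ in $\rmH^1(\bbZ_p;\pi_0\rmB_n)$.

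Next I would bring in the Lyndon-Hochschild-Serre spectral sequence
\[
\rmH^p(\bbZ_p;\rmH^q(\rmG_n^1;\pi_*\rmE_n))\Longrightarrow\rmH^{p+q}(\rmG_n;\pi_*\rmE_n)
\]
for~\eqref{eq:residual}. Morava's change of rings identifies $\rmH^0(\rmG_n^1;\pi_0\rmE_n)=(\pi_0\rmE_n)^{\rmG_n^1}$ with the associated graded of $\pi_0\rmB_n$ in its own $\rmE_n$-descent spectral sequence, and under this identification $u_\rmB$ represents~$1$. The displayed descent spectral sequence is the $(p,0)$-edge of Lyndon-Hochschild-Serre, and the map that carries filtration-$1$ classes of $\pi_{-1}\hatS$ into $\rmH^1(\rmG_n;\pi_0\rmE_n)$ is inflation along $\rmG_n\twoheadrightarrow\bbZ_p$. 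Because $u_\rmB=1$ is $\bbZ_p$-invariant, the class $[u_\rmB]\in\rmH^1(\bbZ_p;\pi_0\rmB_n)$ corresponds, under the isomorphism $\rmH^1(\bbZ_p;M)\cong\operatorname{Hom}_c(\bbZ_p,M)$ on trivial-action submodules, to the embedding $\bbZ_p=\pi_0\hatS\hookrightarrow\pi_0\rmE_n$ sending a topological generator to~$1$. Inflation then produces the composite $\rmG_n\twoheadrightarrow\bbZ_p\hookrightarrow\pi_0\rmE_n$, which is precisely $c_n$ by its definition above the statement.

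The main obstacle is the careful bookkeeping of signs. The automorphism $g\colon\rmB_n\to\rmB_n$ and the connecting map~$\partial$ each depend on the chosen topological generator of $\bbZ_p^\times/\Delta$, and both the isomorphism $\rmH^1(\bbZ_p;M)\cong\operatorname{coker}(g_*-\id)$ and the Lyndon-Hochschild-Serre edge map carry their own normalizations. These conventions can only be made mutually compatible up to an overall sign, so the argument determines the detecting cocycle only up to sign, which is exactly the ambiguity recorded by the $\pm$ in the statement.
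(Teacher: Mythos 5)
This proposition is not proved in the paper at all: as the citation in its header indicates, it is quoted verbatim from Devinatz and Hopkins, and the paper offers no independent argument. So there is no ``paper's own proof'' to compare your attempt against; what one can compare against is the argument in the cited source.

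Your reconstruction is, in outline, the argument Devinatz and Hopkins give: realize the fibration sequence~\eqref{eq:fibration} as the Iwasawa descent for~$\hatS=\rmB_n^{\h\bbZ_p}$, read off from the long exact sequence~\eqref{eq:les} that~$\zeta_n$ sits in filtration one with detecting class~$[u_\rmB]\in\rmH^1(\bbZ_p;\pi_0\rmB_n)$, and then push this into~$\rmH^1(\rmG_n;\pi_0\rmE_n)$ along the Lyndon--Hochschild--Serre edge map for the extension~\eqref{eq:residual}, identifying the image with~$\pm c_n$. Two steps would need tightening to turn the sketch into a proof. First, the comparison between the two-line Iwasawa descent spectral sequence and the~$\rmE_n$-based Adams--Novikov spectral sequence is not merely an identification of~$\rmE_2$-pages via inflation; one must construct a map of towers (or of filtered spectra) realizing~$\hatS\to\rmB_n\to\rmE_n$ compatibly with both descent filtrations, so that the Iwasawa filtration of~$\pi_{-1}\hatS$ is indeed carried into the Adams--Novikov filtration. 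This is exactly the content that Devinatz and Hopkins supply and that your phrase ``the~$(p,0)$-edge of Lyndon--Hochschild--Serre'' glosses over. Second, the passage from~$\rmH^1(\bbZ_p;\pi_0\rmB_n)$ to~$\rmH^1\bigl(\bbZ_p;\rmH^0(\rmG_n^1;\pi_0\rmE_n)\bigr)$ uses that the descent filtration of~$\pi_0\rmB_n$ places~$u_\rmB$ in filtration zero with associated graded image~$1\in(\pi_0\rmE_n)^{\rmG_n^1}$; ``associated graded'' is the right notion, but the induced map on~$\rmH^1(\bbZ_p;-)$ only makes sense on the subquotient where your class actually lives, and this should be said. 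With those points made precise, the rest of the computation, including the identification of the inflated cocycle with~$c_n$ and the sign ambiguity coming from the choice of topological generator, is correct and is exactly how the~$\pm$ in the statement arises.
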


The class~$c_n$ is non-zero and~$\zeta_n$ is non-zero in~$\pi_{-1}\rmS^0$. In fact, it generates a subgroup isomorphic to~$\bbZ_p$ in~$\pi_{-1}\rmS^0$. However, the class~$\zeta_n$ becomes zero in~$\rmB_n$:

\begin{proposition}\label{prop:zeta_B=0}
The composition
\begin{center}
  \mbox{ 
    \xymatrix@1{\rmS^{-1}\ar[r]^{\zeta_n}&
    \rmS^0\ar[r]^{u_\rmB}&
  \rmB_n
} 
  }
\end{center}
of~$\zeta_n$ with the unit~$u_\rmB$ of~$\rmB_n$ is zero.
\end{proposition}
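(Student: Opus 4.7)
The statement is essentially tautological once we unpack the construction of~$\zeta_n$, and the plan is to deduce it directly from the long exact sequence~\eqref{eq:les} associated to the fibration sequence~\eqref{eq:fibration}. The key observation is that $\zeta_n$, by its very definition, equals $\partial_*(u_\rmB)$ and thus lies in the image of the map $\partial_*$.

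To turn this into a proof, I would extend the long exact sequence~\eqref{eq:les} one step further to the right by the connecting homomorphism
\[
\delta\colon[\rmS^0,\rmS^1]\longrightarrow[\rmS^{-1},\rmB_n].
\]
This connecting map arises from the next stage in the Puppe sequence, namely the map $\rmS^1\to\Sigma\rmB_n$ given by~$\pm\Sigma u_\rmB$, so after identifying $[\rmS^0,\Sigma\rmB_n]$ with $[\rmS^{-1},\rmB_n]$ it is given (up to sign) by the rule $f\mapsto u_\rmB\circ\Sigma^{-1}f$. Exactness at $[\rmS^0,\rmS^1]$ gives $\delta\circ\partial_*=0$, and evaluating at the unit~$u_\rmB$ yields $\delta(\zeta_n)=\pm\,u_\rmB\circ\zeta_n=0$, as required.

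A slightly more pictorial way to present the same argument is to rotate~\eqref{eq:fibration} one step to the left, obtaining a fibration sequence
\[
\Sigma^{-1}\rmB_n\xrightarrow{-\Sigma^{-1}\partial}\rmS^0\xrightarrow{u_\rmB}\rmB_n
\]
in which the two displayed consecutive maps compose to null. Since $\zeta_n=\Sigma^{-1}\partial\circ\Sigma^{-1}u_\rmB$ by definition, this yields
\[
u_\rmB\circ\zeta_n=(u_\rmB\circ\Sigma^{-1}\partial)\circ\Sigma^{-1}u_\rmB=0.
\]
In either formulation there is no genuine obstacle: the result is a direct consequence of exactness for the fibration sequence~\eqref{eq:fibration} combined with the definition of~$\zeta_n$. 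The only care required is routine bookkeeping of suspensions and signs when identifying~$\delta$ with post-composition by~$u_\rmB$.
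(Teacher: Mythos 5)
Your proof is correct and takes essentially the same route as the paper: the paper's one-line argument writes $u_\rmB\zeta_n=u_\rmB\partial u_\rmB$ and notes that $u_\rmB\partial\simeq 0$ is already part of the fibration sequence, which is exactly your "pictorial" second formulation via the rotated cofiber sequence (and your first formulation via $\delta\circ\partial_*=0$ in the long exact sequence is the same fact stated in homotopy groups).
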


\begin{proof}
We have~$u_\rmB\zeta_n=u_\rmB\partial u_\rmB$, and there is already a homotopy~$u_\rmB\partial\simeq0$ as part of the fibration sequence~\eqref{eq:fibration}.
\end{proof}

\begin{remark}
If we knew that~$\pi_{-1}\rmB_n=0$ holds, which is a potentially stronger statement than the one in Proposition~\ref{prop:zeta_B=0}, and if we knew in addition that~$g_*=\id$ on~$\pi_0\rmB_n$, then the homomorphism~$\partial_*$ in~\eqref{eq:les} were an isomorphism between~$\pi_0\rmB_n$ and~$\pi_{-1}\widehat\bbS$. Both properties clearly hold in the case~$n=1$. As Hans-Werner Henn has assured, they also hold in the case~$n=2$ and~$p\geqslant3$: there are gaps around the groups~$\pi_{-3}\rmB_n$ and~$\pi_0\rmB_n$, which are both isomorphic to~$\bbZ_p$.
\end{remark}


\subsection{Chromatic characteristics}\label{subsec:char}

In the predecessor~\cite{Szymik} of this paper, we have defined the notion of an~$\rmE_\infty$ ring spectrum~$A$ of prime characteristic. If~$p$ is the prime number in question, then this means that there is a null-homotopy~\hbox{$p\simeq0$} in~$A$. We will now work~$\rmK(n)$-locally and replace the prime numbers~$p$ by the Hopkins-Miller classes~$\zeta_n$.

\begin{definition}\label{def:char}
If~$A$ is a~$\rmK(n)$-local~$\rmE_\infty$ ring spectrum with unit~$u_A\colon\hatS\to A$, then 
\[ 
\zeta_n(A)\colon
  \rmS^{-1}\overset{\zeta_n}{\longrightarrow}\widehat\bbS\overset{u_A}{\longrightarrow}A
\]
is the associated class in~$\pi_{-1}A$. 
If~$A$ is a~$\rmK(n)$-local~$\rmE_\infty$ ring spectrum such that there exists a null-homotopy~$\zeta_n(A)\simeq0$, then we will say that~$A$ has {\it characteristic~$\zeta_n$}, and we may also write
\[
\ch(A)=\zeta_n
\] 
in that case.
\end{definition}

We note that we have just defined a property of~$\rmK(n)$-local~$\rmE_\infty$ ring spectra. We have not, as the notation may suggest, defined a function~$\ch$ that is defined for each such ring spectrum. 

\begin{proposition}\label{prop:algebras}
If~$A$ is a~$\rmK(n)$-local~$\rmE_\infty$ ring spectrum of characteristic~$\zeta_n$, then so is every~$A$-algebra~$B$.
\end{proposition}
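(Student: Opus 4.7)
The plan is to show that $\zeta_n(B)$ factors through $\zeta_n(A)$, which is null-homotopic by hypothesis, so $\zeta_n(B)$ is null-homotopic as well.

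First, I would unpack what it means for $B$ to be an $A$-algebra in the $\rmK(n)$-local $\rmE_\infty$ setting: there is a structure map $f \colon A \to B$ of $\rmK(n)$-local $\rmE_\infty$ ring spectra, and this map fits into a commutative diagram with the unit maps, so that $u_B \simeq f \circ u_A \colon \hatS \to B$. Precomposing with $\zeta_n \colon \rmS^{-1} \to \hatS$ gives
\[
\zeta_n(B) \;=\; u_B \circ \zeta_n \;\simeq\; f \circ u_A \circ \zeta_n \;=\; f \circ \zeta_n(A).
\]
By the hypothesis $\ch(A) = \zeta_n$, there is a null-homotopy $\zeta_n(A) \simeq 0$; applying $f$ yields a null-homotopy $\zeta_n(B) \simeq 0$, which by Definition~\ref{def:char} says that $B$ has characteristic $\zeta_n$ as well.

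There is no real obstacle here: the argument only uses naturality of the unit under maps of $\rmE_\infty$ ring spectra, together with the fact that post-composition with $f$ preserves null-homotopies. The same reasoning, essentially verbatim, proves the analogous statement for prime characteristic in \cite{Szymik}. One could alternatively phrase the proof as saying that the set of classes in $\pi_{-1}B$ that vanish is closed under the $\pi_0 B$-module action, and $\zeta_n(B)$ lies in the image of $\pi_{-1}A \to \pi_{-1}B$ induced by $f$.
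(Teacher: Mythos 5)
Your proof is correct and is exactly the paper's argument, just spelled out in more detail: the paper's one-line proof is ``The unit of any $A$-algebra $B$ factors through the unit of $A$,'' and your computation $\zeta_n(B)\simeq f\circ u_A\circ\zeta_n\simeq f\circ\zeta_n(A)\simeq 0$ is precisely what that sentence encodes.
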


\begin{proof}
The unit of any~$A$-algebra~$B$ factors through the unit of~$A$.
\end{proof}

In order the demonstrate the relevance of the concept of (chromatic) characteristics outside of chromatic homotopy theory itself, we will now give many examples of naturally occurring~$\rmK(n)$-local~$\rmE_\infty$ ring spectra of characteristic~$\zeta_n$.


\section{String bordism and other examples}\label{sec:bordism}

In this section, we prove that (the~$\rmK(n)$-localizations of) several relevant examples of~$\rmE_\infty$ ring spectra have characteristic~$\zeta_n$. We start with some general results and then proceed to discuss the geometrically relevant cases~$n=1$ and~$n=2$. We also present non-examples, of course. The final Section~\ref{subsec:versal} introduces the versal examples~$\SSzeta_n$ that map to any given~$\rmK(n)$-local~$\rmE_\infty$ ring spectrum of characteristic~$\zeta_n$.


\subsection{General results and remarks}

First of all, here is an example which shows that not all~$\rmK(n)$-local~$\rmE_\infty$ ring spectra have characteristic~$\zeta_n$.

\begin{example}
In the initial example~$A=\widehat\bbS$ of the~$\rmK(n)$-local sphere, the unit is the identity, so that we have~\hbox{$\zeta_n(\widehat\bbS)=\zeta_n$}, 
and this is non-zero as a consequence of Proposition~\ref{prop:zetanot0}. Therefore,
\[
\ch(\widehat\bbS)\not=\zeta_n.
\]
This result is analogous to the fact that~$\ch(\bbS)\not=p$ for the (un-localized) ring of spheres.
\end{example}

Clearly, if~$A$ is a~$\rmK(n)$-local~$\rmE_\infty$ ring spectrum such that~$\pi_{-1}A$ vanishes, then the element~\hbox{$\zeta_n(A)\in\pi_{-1}A=0$} is automatically null-homotopic. Let us mention a couple of interesting examples of this type.

\begin{example}\label{ex:E}
Because the Lubin-Tate spectra~$\rmE_n$ are even, we have~$\pi_{-1}\rmE_n=0$, so that~$\zeta_n(\rmE_n)\simeq0$, and this implies
\[
\ch(\rmE_n)=\zeta_n.
\] 
In other words, the Lubin-Tate spectra~$\rmE_n$ all have characteristic~$\zeta_n$.
\end{example}

Even if we have~$\pi_{-1}A\not=0$, or if we are perhaps in a situation when we do not know yet whether this or~\hbox{$\pi_{-1}A=0$} holds, we might still be able to decide if~$\zeta_n(A)$ is null-homotopic. This is the case in the following examples. 

\begin{example}\label{ex:B}
We have
\[
\ch(\rmB_n)=\zeta_n
\]
for all~$n$ by Proposition~\ref{prop:zeta_B=0}.
\end{example}

After these examples that are available for every positive height~$n$, let us now turn towards the geometrically relevant cases, which require~$n=1$ and~$n=2$. 


\subsection{Multiplicative examples}

Let us first discuss examples with height~$n=1$.

\begin{proposition}\label{prop:ko}
We have
\[
\ch(\ko_{\rmK(1)})=\ch(\ku_{\rmK(1)})=\zeta_1
\]
at all primes.
\end{proposition}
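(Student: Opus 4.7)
The plan is to reduce each case either to Example~\ref{ex:E} (the Lubin--Tate case) or to Example~\ref{ex:B} (the Iwasawa case), and, failing those, to fall back on the vanishing of $\pi_{-1}$. First I would handle $\ku_{\rmK(1)}$ uniformly at all primes. The $\rmK(1)$-localization of connective complex $\rmK$-theory is classically identified with the $p$-completed periodic spectrum $\KU_p$, and this in turn is the first Lubin--Tate spectrum $\rmE_1$. Example~\ref{ex:E} says that $\rmE_1$ has characteristic $\zeta_1$, so by Proposition~\ref{prop:algebras} (or simply by the naturality of $A\mapsto\zeta_n(A)$ with respect to unit-preserving $\rmE_\infty$ maps) the same holds for $\ku_{\rmK(1)}$.

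For $\ko_{\rmK(1)}$ I would split according to the prime. At $p=2$, the $\rmK(1)$-localization of $\ko$ is $\KO_2$, which is precisely the Iwasawa extension spectrum $\rmB_1$ of Section~\ref{sec:B}; Example~\ref{ex:B} then gives $\ch(\ko_{\rmK(1)})=\zeta_1$ on the nose, via the null-homotopy of Proposition~\ref{prop:zeta_B=0}. At an odd prime $p$, complex conjugation on $\ku_p$ has order invertible in $\bbZ_p$, so the complexification $\ko\to\ku$ becomes the inclusion of a retract after $p$-completion, and in particular after $\rmK(1)$-localization. Consequently $\pi_{-1}\ko_{\rmK(1)}$ is a direct summand of $\pi_{-1}\ku_{\rmK(1)}=\pi_{-1}\KU_p=0$, so it vanishes, and $\zeta_1(\ko_{\rmK(1)})$ is automatically null-homotopic.

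The main obstacle is really no more than tying down these $\rmK(1)$-localizations to the named chromatic building blocks. Once the equivalences $\ku_{\rmK(1)}\simeq\rmE_1$ and, at $p=2$, $\ko_{\rmK(1)}\simeq\rmB_1$ are in place, together with the splitting of $\ko\to\ku$ at odd primes, the conclusions reduce cleanly either to the earlier examples or to a straightforward dimension count in the coefficient ring.
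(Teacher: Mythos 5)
Your proof is correct, but it takes a more structured route than the paper. The paper's argument is uniform and terse: by \cite[Lemma~2.3.5]{Hovey:Duke} the $\rmK(1)$-localizations of~$\ko$ and~$\ku$ agree with the $p$-completions~$\KO_p$ and~$\KU_p$ of the periodic theories, and both have $\pi_{-1}=0$ (for~$\KO$ because $\pi_7\KO=0$, for~$\KU$ by evenness), so $\zeta_1$ is automatically null. You instead route through the named chromatic building blocks: for~$\ku$ via $\ku_{\rmK(1)}\simeq\KU_p=\rmE_1$ and Example~\ref{ex:E} (which at bottom is the same $\pi_{-1}=0$ observation), and for~$\ko$ at~$p=2$ via $\ko_{\rmK(1)}\simeq\KO_2=\rmB_1$ and Example~\ref{ex:B}, which rests on the genuinely different fact that $\zeta_1$ dies in the cofiber sequence~\eqref{eq:fibration} rather than on a vanishing homotopy group. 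For~$\ko$ at odd~$p$ you correctly avoid the temptation to identify~$\KO_p$ with~$\rmB_1$ (which is the Adams summand $\rmL_p$, not~$\KO_p$, at odd primes) and instead use that $\ko_p\to\ku_p$ splits by the transfer since $2$ is invertible, which carries through $\rmK(1)$-localization and gives $\pi_{-1}\ko_{\rmK(1)}=0$ as a summand of $\pi_{-1}\KU_p$. Your version buys a conceptual link to Section~2's examples, especially at~$p=2$ where it exhibits $\ko_{\rmK(1)}$ as the Iwasawa extension itself; the paper's version buys brevity and uniformity across primes.
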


\begin{proof}
The~$\rmK(1)$-localizations (at~$p$) agree with the~$p$-completions of the periodic versions, compare \cite[Lemma~2.3.5]{Hovey:Duke}. The complete periodic theories are well known to have vanishing~$\pi_{-1}$.
\end{proof}

\begin{proposition}\label{prop:MSpin}
We have
\[
\ch(\MSpin_{\rmK(1)})=\ch(\MSU_{\rmK(1)})=\zeta_1
\]
at all primes.
\end{proposition}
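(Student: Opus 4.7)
The natural plan is to show that $\pi_{-1}\MSpin_{\rmK(1)}=0$ and $\pi_{-1}\MSU_{\rmK(1)}=0$, which forces the class $\zeta_1(A)\in\pi_{-1}A$ to be null and mirrors the strategy used for Proposition~\ref{prop:ko}. Since Proposition~\ref{prop:algebras} runs in the wrong direction for the Atiyah-Bott-Shapiro orientations $\MSpin\to\ko$ and $\MSU\to\ku$, the cleanest approach is to control the homotopy of the source directly.

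First I would split by prime. At an odd prime $p$, the comparison maps $\MSpin\to\MSO$ and $\MSU\to\MU$ become $\rmK(1)$-local equivalences, because their cofibres are built from mod-$2$ cohomology and hence are $\rmK(1)$-acyclic at odd primes. One then invokes the classical splittings of $\MSO_{(p)}$ and $\MU_{(p)}$ as wedges of even suspensions of $\BP$; since $\BP_{\rmK(1)}$ is concentrated in even degrees (being a complex-oriented summand of a height-$1$ Lubin-Tate type spectrum), the vanishing of $\pi_{-1}$ is immediate. At the prime $p=2$, I would invoke the Anderson-Brown-Peterson splitting
$\MSpin_{(2)}\simeq\bigvee_i\Sigma^{n_i}\ko\vee\bigvee_j\Sigma^{m_j}\ko\langle 2\rangle\vee\bigvee_k\Sigma^{\ell_k}\rmH\bbF_2$
with the shifts $n_i,m_j$ non-negative multiples of $4$. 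After $\rmK(1)$-localization the $\rmH\bbF_2$ summands vanish and $\ko\langle 2\rangle_{\rmK(1)}\simeq\ko_{\rmK(1)}$, yielding a wedge of $\Sigma^{4r}\ko_{\rmK(1)}$ with $r\geqslant 0$. Since $\pi_{-1-4r}\KO_2=0$ for each $r\geqslant 0$ (these degrees are all congruent to $3$ or $7$ modulo $8$, which are the slots where $\KO_2$ vanishes), this gives $\pi_{-1}\MSpin_{\rmK(1)}=0$. The analogous argument with $\ku$ in place of $\ko$ (splitting $\MSU_{(2)}$ as a wedge of shifted copies of $\ku$ and $\rmH\bbF_2$) handles $\MSU_{\rmK(1)}$.

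The main obstacle is the precise degree bookkeeping in the Anderson-Brown-Peterson splitting: ensuring that every shift cooperates with Bott periodicity to land in a vanishing slot of $\KO_2$. This is essentially a residue calculation modulo $8$, and is the point where the specific integrality properties of the Atiyah-Bott-Shapiro orientation enter. A more conceptual alternative would exhibit an $\rmE_\infty$ ring map $\ko_{\rmK(1)}\to\MSpin_{\rmK(1)}$ (perhaps via a logarithmic or Iwasawa-theoretic twist along the lines of Laures' work cited in the introduction) and then apply Proposition~\ref{prop:algebras}; such a construction would also clarify the higher-chromatic analogues treated later in the paper, but the direct homotopy-group vanishing seems closer in spirit to the proof of Proposition~\ref{prop:ko} that was just completed.
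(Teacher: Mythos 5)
Your strategy — showing $\pi_{-1}=0$ for the $\rmK(1)$-localizations — is exactly the paper's, and your treatment of $\MSpin$ is essentially sound. At odd primes the reduction to $\MSO$ is a harmless detour (the paper simply cites the fact that $\MSpin_{(p)}$ is itself a wedge of even suspensions of $\BP$); at $p=2$ the paper avoids your mod-$8$ bookkeeping in the Anderson-Brown-Peterson splitting by citing Hovey's $\rmK(1)$-local splitting of $\MSpin$ into copies of $\ko_{\rmK(1)}$ directly~\cite[Proposition~2.3.6]{Hovey:Duke}.

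Your treatment of $\MSU$, however, contains two genuine errors. At odd primes you assert that $\MSU\to\MU$ is a $\rmK(1)$-local equivalence because the cofibre is ``built from mod-$2$ cohomology.'' That is the right picture for $\MSpin\to\MSO$ (the fibre of $B\mathrm{Spin}\to B\mathrm{SO}$ is a $K(\bbZ/2,1)$, so the map is a $\bbZ[1/2]$-equivalence), but it is false for $\MSU\to\MU$: the comparison map is not even a rational equivalence, as one sees already from $\pi_2\MSU=0$ versus $\pi_2\MU=\bbZ$. The correct input, used in the paper, is that $\MSU_{(p)}$ is itself a wedge of even suspensions of $\BP$ at odd primes (Pengelley, Novikov). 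At $p=2$ you propose that $\MSU_{(2)}$ splits as a wedge of shifted copies of $\ku$ and $\rmH\bbF_2$. This is not the case: Pengelley's theorem splits $\MSU_{(2)}$ as a wedge of suspensions of $\rmH\bbF_2$ and of the spectrum $\BoP$, which is not $\ku$; see Pengelley, and Kochman's ``The ring structure of $\BoP$,'' as well as Botvinnik, all cited in the paper. One must then still argue that the $\rmK(1)$-localizations of these $\BoP$-summands have vanishing $\pi_{-1}$, which is not the statement about $\ku$ you quote. So the $\MSU$ branch of your argument needs to be replaced by the references the paper supplies.
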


\begin{proof}
At odd primes both~$\MSpin$ and~$\MSU$ are equivalent to wedges of even suspensions  of~$\BP$, so that their~$\rmK(1)$-localizations are well understood. The~$\rmK(1)$-localization of~$\BP$ is 
\[
\BP_{\rmK(1)}=(v_1^{-1}\BP)_p.
\]
See~\cite[Lemma~2.3]{Hovey:conjecture}, for example. We see that~$\pi_*\BP_{\rmK(1)}$ is concentrated in even degrees. This implies~$\pi_{-1}=0$ for both of the spectra~$\MSpin$ and~$\MSU$, and {\it a fortiori}~$\ch=\zeta_1$ for both of them.

The case~$p=2$ needs some extra arguments. Since the Spin bordism spectrum~$\MSpin$, as any Thom spectrum, is connective, we in particular have~\hbox{$\pi_{-1}\MSpin=0$}. This does not imply the result for the~$\rmK(1)$-localization, however, think of~$\bbS$. But, the Anderson-Brown-Peterson splitting shows that this still holds after~$\rmK(1)$-localization, since the spectrum~$\MSpin$ splits~$\rmK(1)$-locally at the prime~$p=2$ as a wedge of localizations of copies of the spectrum~$\ko$, compare~\cite[Proposition~2.3.6]{Hovey:Duke}. Therefore, the result follows from what we have said for the~$\rmK$-theories. For~$\MSU$, replace the ABS-splitting by the results in~\cite{Botvinnik}, \cite{Kochman}, and~\cite{Pengelly}. 
\end{proof}

The higher multiplicative structure of the~$\rmK(1)$-localization of~$\MSU$ at the prime~$p=2$ has been investigated further in Reeker's thesis~\cite{Reeker}.


\subsection{Elliptic examples}

We will now turn towards examples with height~$n=2$.

\begin{proposition}\label{prop:tmf}
We have
\[
\ch(\tmf_{\rmK(2)})=\zeta_2
\]
at all primes.
\end{proposition}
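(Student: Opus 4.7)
The plan is to exhibit $\tmf_{\rmK(2)}$ as a $\rmB_2$-algebra and then invoke Example~\ref{ex:B} together with Proposition~\ref{prop:algebras}. Since Example~\ref{ex:B} gives $\ch(\rmB_2)=\zeta_2$, and the characteristic $\zeta_n$ property is inherited by algebras, it is enough to produce a map of~$\rmK(2)$-local~$\rmE_\infty$ ring spectra $\rmB_2 \to \tmf_{\rmK(2)}$.

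The first step is to recall the by-now-standard description of $\tmf_{\rmK(2)}$ as a Devinatz-Hopkins fixed point spectrum: at each prime, a suitable finite subgroup $F\leqslant\rmG_2$ (for example, the extension of the binary tetrahedral group by the Galois group at $p=3$, or Behrens' $\rmG_{48}$ at $p=2$) yields an equivalence $\tmf_{\rmK(2)}\simeq\rmE_2^{\h F}$ of~$\rmE_\infty$ ring spectra, as in the work of Hopkins-Miller, Goerss-Hopkins, and Behrens. With this identification in hand, the functoriality of the construction $K\mapsto\rmE_n^{\h K}$ in closed subgroups, established in~\cite{Devinatz+Hopkins}, provides a map of~$\rmE_\infty$ ring spectra $\rmE_2^{\h K_1}\to\rmE_2^{\h K_2}$ whenever $K_2\leqslant K_1$.

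The second step is the (easy) observation that $F\leqslant\rmG_2^1$. Indeed, the image of $F$ under the composition $\rmG_2\to\bbZ_p^\times\to\bbZ_p^\times/\Delta\cong\bbZ_p$ is a finite subgroup of the torsion-free group~$\bbZ_p$, hence trivial; this places $F$ inside the kernel~$\rmG_2^1$ of this composite. The inclusion $F\leqslant\rmG_2^1$ therefore produces a map of~$\rmE_\infty$ ring spectra
\[
\rmB_2 \;=\; \rmE_2^{\h\rmG_2^1}\;\longrightarrow\;\rmE_2^{\h F}\;\simeq\;\tmf_{\rmK(2)},
\]
exhibiting $\tmf_{\rmK(2)}$ as an $\rmE_\infty$ $\rmB_2$-algebra. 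Applying Proposition~\ref{prop:algebras} then gives $\ch(\tmf_{\rmK(2)})=\zeta_2$, as claimed.

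The main conceptual obstacle is not computational but rather the appeal to the higher structural identification of $\tmf_{\rmK(2)}$ as $\rmE_2^{\h F}$ as~$\rmE_\infty$ ring spectra; once this input (and the $\rmE_\infty$-functoriality of Devinatz-Hopkins fixed points in the subgroup variable) is in place, the argument is purely formal. An alternative, more hands-on route would be to observe that $\pi_{-1}\tmf_{\rmK(2)}$ can be controlled via the descent (or elliptic) spectral sequence and to show directly that $\zeta_2$ vanishes in this group; but this seems strictly harder than, and ultimately equivalent to, the algebra-over-$\rmB_2$ approach taken above.
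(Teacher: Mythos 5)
Your proposal is correct, and it is in fact the route the paper acknowledges as a strengthening in the paragraph immediately following its own proof. The paper's proof works one layer lower: rather than producing an $\rmE_\infty$ map $\rmB_2\to\tmf_{\rmK(2)}$, it tracks the class detecting $\zeta_2$ in the $\rmK(n)$-local $\rmE_n$-based Adams--Novikov spectral sequence. Concretely, using the same identification $\tmf_{\rmK(2)}\simeq\rmE_2^{\h M}$ for a maximal finite subgroup $M\leqslant\rmG_2$ (at $p=2,3$, and an analogous description via sections of the derived structure sheaf over the supersingular locus at $p\geqslant 5$), the paper shows that the image of $\pm c_2$ in $\rmH^1(\rmG_2;(\rmE_2)_0\tmf)$ is given by restricting the $1$-cocycle $c_2$ along $M\hookrightarrow\rmG_2$, and that this restriction vanishes because $M$ is finite and $\pi_0\rmE_2$ is torsion-free. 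The key input — finiteness of the relevant group, so that the determinant composed with the quotient $\bbZ_p^\times\to\bbZ_p^\times/\Delta\cong\bbZ_p$ is trivial on it — is exactly what you use to show $F\leqslant\rmG_2^1$.

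Comparing the two: your $\rmB_2$-algebra route is cleaner and more structural, since it reduces the proposition entirely to Example~\ref{ex:B} and Proposition~\ref{prop:algebras} once the $\rmE_\infty$ map $\rmE_2^{\h\rmG_2^1}\to\rmE_2^{\h F}$ is in hand; the paper's route is a direct cohomological detection argument. Both rest on the same nontrivial inputs (the identification of $\tmf_{\rmK(2)}$ as a Devinatz--Hopkins fixed point spectrum, and contravariant functoriality of $K\mapsto\rmE_n^{\h K}$ in closed subgroups from \cite{Devinatz+Hopkins}). One small caveat about your phrasing: at primes $p\geqslant 5$ the description of $\tmf_{\rmK(2)}$ is a bit subtler than a single $\rmE_2^{\h F}$, since there can be several supersingular points; the paper is careful to say only that the sections are built from homotopy fixed points of Lubin--Tate spectra by finite groups. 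This does not affect the conclusion, since each such factor is still a $\rmB_2$-algebra, but it is worth stating more precisely.
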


\begin{proof}
In \cite[Remark~1.7.3]{Behrens}, Behrens has given an argument for the identification of the~$\rmK(2)$-localization of the spectrum of topological modular forms with~$\EO_2$, the homotopy fixed point spectrum of~$\rmE_2$ with respect to the maximal finite subgroup~$M$ of the extended Morava group~$\rmG_2$, that holds for the prime~$p=3$. His argument can be adapted to the case~$p=2$ as well. This allows us to control the class in~$\rmH^1(\rmG_2;(\rmE_2)_0\tmf)$ that detects the image of~$\zeta_2$ in the~$\rmK(n)$-local~$\rmE_n$-based Adams-Novikov spectral sequence. This is class is the image of~$\pm c_2$ under the homomorphism
\[
\rmH^1(\rmG_2;(\rmE_2)_0)
\longrightarrow
\rmH^1(\rmG_2;(\rmE_2)_0\tmf)
\]
induced by the unit~$\hatS\to\tmf_{\rmK(2)}$. The unit is given by the Devinatz-Hopkins fixed points of~$\rmE_2$ with respect to the inclusion of the maximal subgroup~$M$ into~$\rmG_2$. Therefore, the image of the class~$\pm c_2$ is the composition
\[
M\overset{\leqslant}{\longrightarrow}\rmG_2\overset{\pm c_2}{\longrightarrow}\pi_0\rmE_2.
\]
Since the coefficient ring~$\pi_0\rmE_2$ is torsion-free, and~$M$ is finite, this image is automatically zero, so that the class in~$\rmH^1(\rmG_2;(\rmE_2)_0\tmf)$ that detects the image of~$\zeta_2$ necessarily vanishes.

The situation at large primes~$p\geqslant5$ is similar, but less well represented in the published literature. See~\cite{Behrens:TMF}: The~$\rmK(2)$-localization of~$\tmf$ is the spectrum of global sections of the derived structure sheaf of the completion of the moduli stack of generalized elliptic curves in characteristic~$p$ at the complement of the ordinary locus. This sheaf can be constructed using the Goerss-Hopkins-Miller theory of Lubin-Tate spectra. The upshot is that the spectra of sections are given by homotopy fixed points of Lubin-Tate spectra with respect to finite groups. The difference is that this time their orders are co-prime to the characteristic. In any case, we see that the same argument as for~$p=2$ and~$p=3$ can be applied.
\end{proof}

The argument just given shows a bit more: Since the maximal subgroup~$M$ sits inside the subgroup~$\rmG_2^1$, we even know that the~$\rmK(2)$-localization of the topological modular forms spectrum is a~$\rmB_2$-algebra. By Proposition~\ref{prop:algebras} and Example~\ref{ex:B}, we know that~$\ch(T)=\zeta_n$ for all~$\rmB_n$-algebras~$T$.

\begin{proposition}\label{prop:Melliptic}
We have
\[
\ch(\MString_{\rmK(2)})=\ch(\MUsix_{\rmK(2)})=\zeta_2
\]
at all primes.
\end{proposition}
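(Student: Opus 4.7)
My plan is to mimic the pattern of Propositions~\ref{prop:MSpin} and~\ref{prop:tmf}. The central ingredients are the Ando-Hopkins-Rezk refinement of the Witten genus to an~$\rmE_\infty$ ring map~$\MString\to\tmf$, and the Ando-Hopkins-Strickland cubical-structures orientation~$\MUsix\to\tmf$, which is also an~$\rmE_\infty$ ring map. Both provide bridges after~$\rmK(2)$-localization to~$\tmf_{\rmK(2)}$, whose characteristic was shown in Proposition~\ref{prop:tmf} to equal~$\zeta_2$. The strategy is to combine these bridges with splitting results at each prime in order to transfer the vanishing of the Hopkins-Miller class back to the Thom spectra.

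At large primes~$p\geqslant 5$, I would use that both~$\MString_{(p)}$ and~$\MUsix_{(p)}$ split as wedges of even suspensions of~$\BP$-module spectra with even-degree homotopy. Since~$\BP_{\rmK(2)}$ has homotopy concentrated in even degrees (cf.~\cite[Lemma~2.3]{Hovey:conjecture}), each summand has vanishing~$\pi_{-1}$ after~$\rmK(2)$-localization, so the characteristic question is settled {\em a fortiori}. At the small primes~$p=2$ and~$p=3$, where the analogous~$\BP$-based argument is insufficient, the plan is to establish a~$\tmf$-based splitting of~$\MString_{\rmK(2)}$: a decomposition as a wedge of suspensions of~$\tmf_{\rmK(2)}$, arranged so that the unit of~$\MString_{\rmK(2)}$ factors through the inclusion~$\iota\colon\tmf_{\rmK(2)}\hookrightarrow\MString_{\rmK(2)}$ of the degree-zero summand. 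Naturality of~$\zeta_2(-)$ along~$\iota$ would then give~$\zeta_2(\MString_{\rmK(2)})=\iota_*\zeta_2(\tmf_{\rmK(2)})=0$ by Proposition~\ref{prop:tmf}. The case of~$\MUsix_{\rmK(2)}$ would be handled in parallel, using its complex orientability together with the Ando-Hopkins-Strickland orientation in place of the~$\sigma$-orientation.

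The main obstacle will be the prime~$p=2$: a clean~$\tmf_{\rmK(2)}$-module splitting of~$\MString_{\rmK(2)}$ is the precise chromatic analogue, at height~$n=2$, of the Anderson-Brown-Peterson splitting that carries the argument at height~$n=1$ in Proposition~\ref{prop:MSpin}, but it is substantially more delicate and less well documented in the literature. If no such splitting is directly available, an alternative would be to run the Shapiro-lemma argument of Proposition~\ref{prop:tmf} directly at the level of the~$\rmK(2)$-local~$\rmE_2$-based Adams-Novikov spectral sequence: produce a~$\rmG_2$-equivariant decomposition of the continuous~$\rmG_2$-module~$(\rmE_2)_0\MString_{\rmK(2)}$ into orbits with finite stabilizers, on which~$c_2$ must restrict to zero by torsion-freeness of~$\pi_0\rmE_2$. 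By Proposition~\ref{prop:zetanot0} this forces the detecting class of~$\zeta_2(\MString_{\rmK(2)})$ to vanish, completing the argument.
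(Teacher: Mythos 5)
Your large-prime argument agrees with the paper's. For~$p=2,3$, however, both of your proposed routes have real gaps, and the paper uses a different device entirely.

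Your primary plan rests on a~$\tmf_{\rmK(2)}$-module splitting of~$\MString_{\rmK(2)}$ in which the unit of~$\MString_{\rmK(2)}$ factors through a degree-zero~$\tmf_{\rmK(2)}$-summand. You acknowledge that no such splitting is documented, and indeed it is not a known result; it is not analogous in difficulty to the Anderson--Brown--Peterson splitting, and cannot be invoked. Moreover, the~$\rmE_\infty$ maps you want to use go the wrong way for the naturality you need: the~$\sigma$-orientation and the Ando--Hopkins--Strickland orientation give~$\MString\to\tmf$ and~$\MUsix\to\tmf$, so by Proposition~\ref{prop:algebras} it is~$\tmf_{\rmK(2)}$ that inherits characteristic~$\zeta_2$ from~$\MString_{\rmK(2)}$ or~$\MUsix_{\rmK(2)}$, not the reverse. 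A wrong-way map~$\iota\colon\tmf_{\rmK(2)}\to\MString_{\rmK(2)}$ factoring the unit would have to come from a splitting, which is exactly what you do not have. Your fallback, decomposing~$(\rmE_2)_0\MString_{\rmK(2)}$ into~$\rmG_2$-orbits with finite stabilizers, has no basis: this Morava module is large and there is no reason to expect such a decomposition; the finite-subgroup mechanism of Proposition~\ref{prop:tmf} is special to fixed-point spectra~$\rmE_2^{\h M}$ and does not transfer.

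The paper's actual argument at~$p=2,3$ goes in the opposite direction along the Thom-spectrum side: since there is an~$\rmE_\infty$ map~$\MUsix\to\MString$, Proposition~\ref{prop:algebras} reduces the claim to~$\MUsix$. One then takes a finite even complex~$F$ (from Hovey--Ravenel) so that~$\MUsix\wedge F$ splits~$\rmK(2)$-locally into even suspensions of~$\BP_{\rmK(2)}$. The bottom-cell map~$\MUsix\to\MUsix\wedge F$ induces an injection on~$\rmH^1(\rmG_2;(\rmE_2)_0(-))$ because~$(\rmE_2)_*F$ is free over~$(\rmE_2)_*$, and the target~$\rmH^1$ vanishes because~$\BP_{\rmK(2)}$ is Landweber exact, so the~$\rmK(2)$-local~$\rmE_2$-Adams--Novikov~$\rmE_2$-page for it is concentrated on the zero line. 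Those three ingredients --- reduction to~$\MUsix$ via the ring map, smashing with a finite even complex, and the Landweber-exactness collapse --- are precisely what is missing from your proposal.
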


\begin{proof}
This is again easier for large primes, and we will deal with these cases first. If~$p\geqslant5$, then both~$\MString$ and~$\MUsix$ split~$p$-locally as a wedge of even suspensions of~$\BP$ by \cite[Corollary~2.2]{Hovey+Ravenel}. See also \cite{Hovey:large_primes}. {\it A fortiori}, this holds also~$\rmK(2)$-locally at the prime in question. But the~$\rmK(2)$-localization of~$\BP$ is \begin{equation}\label{eq:BPloc}
\BP_{\rmK(2)}=(v_2^{-1}\BP)_{p,v_1}.
\end{equation}
See~\cite[Lemma~2.3]{Hovey:conjecture} again. We see that the homotopy groups~$\pi_*\BP_{\rmK(2)}$ are concentrated in even degrees. Therefore, we can deduce that
\[
\pi_{-1}\MString_{\rmK(2)}=\pi_{-1}\MUsix_{\rmK(2)}=0
\]
for large primes, from which the statement follows.

For the small primes~$p=2$ and~$p=3$, we have to work a bit harder. In these cases, we can still find finite complexes~$F$ with cells (depending on~$p$) only in even dimensions such that~$\MString\wedge F$ and~$\MUsix\wedge F$ split as wedges of (even) suspensions of~$\BP$, see~\cite[Corollary~2.2]{Hovey+Ravenel}. Strictly speaking, this excludes the case~$\MString$ at the prime~$p=2$. But, since there is a map~$\MUsix\to\MString$, Proposition~\ref{prop:algebras} guarantees that it is sufficient to prove our result for~$\MUsix$ to be able to infer it for~$\MString$ as well.

We will control the class in~$\rmH^1(\rmG_2;(\rmE_2)_0\MUsix)$ that detects~$\zeta_2$ in the~$\rmK(n)$-local~$\rmE_n$-based Adams-Novikov spectral sequence. Since~$F$ has cells only in even dimensions, the Atiyah-Hirzebruch sequence for~$(\rmE_2)_*F$ collapses, and it shows that this is a free~$(\rmE_2)_*$-module~(of rank the number of cells of~$F$). Then~$(\rmE_2)_*\MUsix$ is embedded into
\[
(\rmE_2)_*(\MUsix\wedge F)\cong(\rmE_2)_*\MUsix\otimes_{(\rmE_2)_*}(\rmE_2)_*F
\]
as a direct summand by the map~$\MUsix\to\MUsix\wedge F$ given by the bottom cell of~$F$. It follows that this map induces an injection
\[
\rmH^1(\rmG_2;(\rmE_2)_0\MUsix)\longrightarrow\rmH^1(\rmG_2;(\rmE_2)_0(\MUsix\wedge F)),
\]
so that it is more than enough to show that the target vanishes. This follows from the splitting of~$\MUsix\wedge F$: We know that the spectrum~$\BP_{\rmK(2)}$ is Landweber exact, for example by \eqref{eq:BPloc}. This implies that~$\BP_*(\BP_{\rmK(2)})$ is an extended~$\BP_*$-comodule, so that the~$\BP$-based Adams-Novikov spectral sequence collapses: the only non-zero terms on the~$\rmE_2$ page live on the~$0$-line. By the change-of-rings theorem, we infer that the groups on the~$\rmE_2$ page are given by~$\rmH^s(\rmG_2;(\rmE_2)_t\BP)$. We deduce that these groups vanish for positive~$s$. The same holds, of course, when we replace~$\BP$ with any of its suspensions.
\end{proof}


\subsection{Versal examples}\label{subsec:versal}

An important theoretical role in the theory of~$\rmK(n)$-local~$\rmE_\infty$ ring spectra of characteristic~$\zeta_n$ is played by the versal examples. These will be introduced now.

Let~$\bbP X$ denote the free~$\rmK(n)$-local~$\rmE_\infty$ ring spectrum on  a~$\rmK(n)$-local spectrum~$X$. There is an adjunction
\[
\calE^{\rmK(n)}_\infty(\bbP X,A)\cong\calS^{\rmK(n)}_\infty(X,A)
\]
between the space of~$\rmK(n)$-local~$\rmE_\infty$ ring maps and the space of maps of~$\rmK(n)$-local spectra, which sends an~$\rmE_\infty$ map~$\bbP X\rightarrow A$ to its restriction along the unit~$X\rightarrow\bbP X$ of the adjunction. (The unit of the~$\rmE_\infty$ ring spectrum~$\bbP X$ is a map~$\hatS\to\bbP X$, of course.) The inverse is denoted by~$x\mapsto\ev(x)$ for any given class~\hbox{$x\colon X\rightarrow A$}.

\begin{definition}
The~$\rmK(n)$-local~$\rmE_\infty$ ring spectrum~$\SSzeta_n$ is defined as a homotopy pushout
\begin{center}
  \mbox{ 
    \xymatrix{
      \bbP \rmS^{-1}\ar[r]^{\ev(0)}\ar[d]_{\ev(\zeta_n)}&\hatS\ar[d]^{} \\
      \hatS\ar[r]_{}&\SSzeta_n
    } 
  }
\end{center}
in the category of~$\rmK(n)$-local~$\rmE_\infty$ ring spectra. 
\end{definition}

There are various way of producing a homotopy pushout. The easiest one might be to start with a cofibrant model of~$\hatS$, replacing the morphism~$\ev(0)=\bbP(\rmS^{-1}\to\rmD^0)$ by~$\bbP$ of a cofibration~$\rmS^{-1}\to K$ for some contractible~$K$, for example the cone on~$\rmS^{-1}$, and then taking the actual pushout.

\begin{proposition}\label{prop:versal}
We have
\[
\ch(\SSzeta_n)=\zeta_n
\]
for all primes~$p$.
\end{proposition}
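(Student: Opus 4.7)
The plan is to unwind the universal property of the homotopy pushout defining $\SSzeta_n$ and show that it forces the relevant composition to be null-homotopic.

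First, I would identify the unit $u_{\SSzeta_n}\colon\hatS\to\SSzeta_n$ with either of the two maps $\hatS\to\SSzeta_n$ in the pushout square (they are canonically homotopic, being the two legs from a common vertex). Then, tracing around the pushout square, the two maps $\bbP\rmS^{-1}\to\SSzeta_n$ obtained by composing along the top-right and left-bottom paths are homotopic in the category of $\rmK(n)$-local $\rmE_\infty$ ring spectra, and hence in particular as maps of $\rmK(n)$-local spectra.

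Next, I would restrict both composites along the unit $\rmS^{-1}\to\bbP\rmS^{-1}$ of the free/forgetful adjunction. The defining property of $\ev$ gives, for any class $x\colon X\to A$, that $\ev(x)$ restricts back to $x$ along $X\to\bbP X$. Applied to $x=\zeta_n$ and $x=0$ respectively, this shows that the restriction of the first composite equals $u_{\SSzeta_n}\circ\zeta_n$, while the restriction of the second equals $u_{\SSzeta_n}\circ 0\simeq 0$. Combining with the previous paragraph yields a null-homotopy $\zeta_n(\SSzeta_n)=u_{\SSzeta_n}\circ\zeta_n\simeq 0$, which is exactly the assertion $\ch(\SSzeta_n)=\zeta_n$.

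I do not expect any genuine obstacle here: the argument is formal, being essentially the universal property of the pushout together with the adjunction defining $\bbP$. The only point that deserves care is a bookkeeping one, namely to check that the pushout in $\rmK(n)$-local $\rmE_\infty$ ring spectra really does produce the homotopy in the underlying spectra category that one needs to conclude null-homotopy of a map of spectra; this is automatic since the forgetful functor to $\rmK(n)$-local spectra preserves the relevant homotopies of maps, and the concrete cofibrant replacement model mentioned after the definition (cone on $\rmS^{-1}$) makes this transparent if one wishes to verify it by hand.
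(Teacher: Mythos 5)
Your proof is correct and takes essentially the same approach as the paper's: the paper simply records this argument diagrammatically, as an enlarged commutative diagram in which the restriction of both legs of the pushout to $\rmS^{-1}$ exhibits the homotopy $\zeta_n(\SSzeta_n)\simeq 0$. The only cosmetic difference is that the paper leaves implicit the point you spell out, that both $\rmE_\infty$ maps $\hatS\to\SSzeta_n$ are (necessarily) the unit.
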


\begin{proof}
The homotopy commutativity of the enlarged diagram
\begin{center}
  \mbox{ 
    \xymatrix{
      \rmS^{-1}\ar@(r,u)[drr]^0\ar[dr]\ar@(d,l)[ddr]_{\zeta_n}&&\\
      &\bbP \rmS^{-1}\ar[r]\ar[d]&\hatS\ar[d]^{} \\
      & \hatS\ar[r]_{}&\SSzeta_n
    } 
  }
\end{center}
immediately shows that~$\zeta_n(\SSzeta_n)$ is homotopic to zero.
\end{proof}

\begin{remark}\label{rem:uniqueness}
The~$\rmK(n)$-local~$\rmE_\infty$ ring spectrum~$\SSzeta_n$ has the usual property of any homotopy pushout: a null-homotopy of~$\zeta_n(A)$ gives rise to a map~$\SSzeta_n\to A$, and conversely. In fact, this allows us to add upon the preceding proposition: Any choice of homotopy pushout~$\SSzeta_n$ comes with a {\em preferred} homotopy~$\zeta_n(\SSzeta_n)\simeq0$ (that corresponds to the identity map). It also implies that there is a map
\begin{equation}\label{eq:not_unique}
\SSzeta_n\longrightarrow A
\end{equation}
of~$\rmK(n)$-local~$\rmE_\infty$ ring spectra if and only if~$\ch(A)=\zeta_n$. There is no reason why a map~\eqref{eq:not_unique}, once it exists, should be unique. In fact, there will usually be many such maps, even up to homotopy. This explains our use of Artin's term `versal' rather than~`universal.'
\end{remark}


\section*{Acknowledgments}

This research has been supported by the Danish National Research Foundation through the Centre 
for Symmetry and Deformation (DNRF92). I would like to thank Hans-Werner Henn for a helpful conversation.



\vfill

\parbox{\linewidth}{
Department of Mathematical Sciences\\
University of Copenhagen\\
Universitetsparken 5\\
2100 Copenhagen \O\\
DENMARK\\
\phantom{ }\\
\href{mailto:szymik@math.ku.dk}{szymik@math.ku.dk}\\
\href{http://www.math.ku.dk/~xvd217}{www.math.ku.dk/$\sim$xvd217}}

\end{document}